\newtheorem{definition}{Definition}[section]
\newtheorem{proposition}[definition]{Proposition}
\newtheorem{theorem}[definition]{Theorem}
\newtheorem{corollary}[definition]{Corollary}
\newtheorem{lemma}[definition]{Lemma}
\theoremstyle{definition}
\newtheorem{remark}[definition]{Remark}
\newtheorem{example}[definition]{Example}
\newcommand{\I}{\operatorname{Id}}
\newcommand{\ud}{\mathrm{d}}
\newcommand{\1}{\mathbbm{1}}
\title[Regularity of SPDEs in metric measure spaces]{Regularity of the solutions to SPDEs in metric measure spaces}
\author{Elena Issoglio}
\address[E. Issoglio]{University of Leeds, School of Mathematics, Leeds, LS2 9JT, UK}
\email{e.issoglio@leeds.ac.uk}
\author{Martina Z\"ahle}
\address[M. Z\"ahle]{Friedrich Schiller University,
             Institute of Mathematics, 07743 Jena, Germany}
\email{martina.zaehle@uni-jena.de}
\thanks{\center \vspace{-10pt} \fbox{\parbox{\textwidth}{The final publication is available at Springer in \emph{Stoch PDE: Anal Comp} 
  via  http://dx.doi.org/10.1007/s40072-015-0048-8}}}
\keywords{Stochastic nonlinear PDEs, regularity of solutions, pathwise solutions, semigroups, metric measure spaces, fractals}
\subjclass[2010]{Primary: 60H15, Secondary: 31E05, 35K55, 35R60, 28A80}
\begin{document}

\begin{abstract}
In this paper we study the regularity of non-linear parabolic PDEs and stochastic PDEs on metric measure spaces admitting heat kernel estimates. In particular we consider mild function solutions to abstract Cauchy problems and show that the unique solution is H\"older continuous in time with values in a suitable fractional Sobolev space. As this analysis is done via a-priori estimates, we can apply this result to stochastic PDEs on metric measure spaces and solve the equation in a pathwise sense for almost all paths. The main example of noise term is of fractional Brownian type and the metric measure spaces can be classical as well as given by various fractal structures. The whole approach is low dimensional and works for spectral dimensions less than 4.
\end{abstract}

\maketitle




\section{Introduction}

In this paper the following non-linear Cauchy problem
\begin{equation}\label{eq: introduction: Cauchy pb}
\begin{cases}
\frac{\partial u}{\partial t} = -A u + F(u) + G(u)\cdot \dot{z}, & t\in(0,t_0]\\
u(0)=f
\end{cases}
\end{equation}
is considered on $\sigma $-finite metric measure spaces $(X,\mu ,d)$. Here $t_0>0$ is arbitrary, $-A$ is the generator of a {\it Markovian strongly continuous symmetric semigroup} $\{T(t), t\geq0\}$ on  $L_2(\mu)$, $F$ and $G$ are sufficiently regular functions. The term  $\dot{z}$ denotes a fractional space-time perturbation which will be made more precise later on. In the case of linear spaces it can be interpreted as a formal time derivative of a spatial distribution. Solutions to \eqref{eq: introduction: Cauchy pb} are considered in the mild form, formally given by
\begin{multline}\label{eq: introduction: mild sol}
u(t)= T(t)f + \int_0^t T(t-s)F(u(s)) \ud s + \int_0^t T(t-s)G(u(s)) \ud z( s).
\end{multline}
This formulation is, in a first place, only formal. We will give it a proper mathematical meaning as it is done in \cite{HZ12}, in particular the last term involving the noise $z$ is defined by means of fractional derivatives and it is shown to be indeed well defined using the notion of pointwise product of functions and ``distributions''. The spaces which we will use to describe the space-regularity of the solution \eqref{eq: introduction: mild sol} are fractional Sobolev spaces defined on metric measure spaces by means of the associated semigroups.

The main aim of this paper is to show that the mild solution $u$ of the Cauchy problem \eqref{eq: introduction: Cauchy pb} given by \eqref{eq: introduction: mild sol} is  $\gamma$-H\"older continuous in time with respect to the $H^\delta(\mu)$-norm in space (Theorem \ref{thm: gamma holder regularity}). This result is achieved under the same assumptions as in Theorem 1.2 in \cite{HZ12}, where it was shown that a unique solution exists and belongs to the space $W^\gamma([0,t_0], H^\delta_\infty(\mu))$. Moreover, under slightly stricter conditions, we can show that the solution in fact belongs to any space $W^\gamma([0,t_0], H^\delta_\infty(\mu))\cap C^\gamma([0,t_0], H^\delta(\mu))$ for all $\gamma$ and $\delta$ smaller than certain parameters determined by the regularity properties of the distributional noise $z$ and the initial function $f$ (Corollary \ref{cor: extra holder regularity}). In the case of noises of fractional Brownian type these parameters are determined my means of the Hurst exponents in space and in time. Note that the fractional time regularity is always greater than $1/2$.\\ In Remark \ref{rem: low spectral dim} we outline an extension of the results to an appropriate parameter condition for the case of spectral dimensions $\le 1$, which has not been considered in \cite{HZ12}. In particular, white noise in space can be treated in this low dimensional case.

Deterministic elliptic equations and some parabolic equations without noises on classes of fractals have been studied, e.g. in \cite{Ba98,Fa99,FH01,GHL03,St06,FHS12}.\\
Abstract problems with Brownian and fractional Brownian noises have been considered in many papers with various approaches, in particular, in \cite{Wa86,dPZ92,GA99,MN03,TTV03,GLT06,PR07,FKN11}. None of these covers the results of the present paper. \\
Some relationships have been discussed in \cite{HZ12}, see also \cite{HIZ14}. To these two references, we only add a brief  comparison between the present paper and the rough path approach developed in recent years. For example in a series of papers \cite{HV11, HW13, HMW14} the rough path approach has been applied to study  a stochastic Burger-type equation with multiplicative white noise. 
Even though the equation considered there is of a different kind than the one studied in this paper (the setting is Euclidean and not metric measure space, the noise is white and not coloured and there is a non-linear product term of the form $g(u)\partial_x u$ which we do not have), it is interesting to notice that one of the main difficulties is to give a meaning to the solution, in particular to the non-linear product term $g(u)\partial_x u$, and this is done using the notion of \emph{paraproduct}.

\section{Preliminaries}
\subsection{Semigroups and potential spaces}\label{semigroups_potential spaces}

Throughout the paper we use the letter  $c$ for a general finite constant which might change value from line to line.\\
We first recall some basic notions and relationships which are known from the literature.
In the case of metric measure spaces the analogues of the classical fractional Sobolev (or Bessel potential) spaces in the literature are introduced by means of the given semigroup $\{T(t), t\geq0\}$, i.e., of its generator $-A$:

The \emph{generalized Bessel potential operator} on $L_2(\mu)$ is defined for $\sigma\geq 0$ as
\[
J^\sigma (\mu):= (A+\I)^{-\sigma/2}.
\]
To each operator there corresponds a \emph{potential space}  defined as
\[
H^\sigma(\mu):= J^\sigma(L_2(\mu))
\]
and equipped with the norm $\|u\|_{H^\sigma(\mu)}:= \| u\|_{L_2(\mu)}+\|A^{\sigma/2} u\|_{L_2(\mu)}$, which is equivalent to $\|(A+\I)^{\sigma/2}u\|_{L_2(\mu)}$. In fact these spaces correspond to the domains of fractional powers of $A$, i.e., $D((A+\I)^{\sigma/2}) =D(A^{\sigma/2})= H^{\sigma}(\mu)$. In particular, for any $\alpha\geq0$ the operator $J^\alpha$ acts as an isomorphisms between $H^\sigma(\mu)$ and $H^{\alpha + \sigma}(\mu)$. Analogously one can define the potential spaces corresponding to the generators $-A_p, 1<p<\infty$, of Markovian semigroups on $L_p(\mu)$. They are denoted by $H_p^\sigma(\mu)$ and clearly $H_2^\sigma(\mu)=H^\sigma(\mu)$. We will also consider the spaces
$$ H^\sigma_\infty(\mu) := H^\sigma(\mu)\cap L_\infty(\mu)$$
 normed by $\|\cdot\|_{H^\sigma(\mu)}+\|\cdot\|_\infty$, with slight abuse of notation. Here the norm $\|\cdot\|_\infty$ in $L_\infty(\mu)$ is given by the essential supremum.\\ The dual spaces of $H^\sigma_p(\mu)$ will be used in the sequel:  for $1<p<\infty, \sigma\geq0$ they are denoted by
\[
H^{-\sigma}_{p'}(\mu):= \left(H^\sigma_p(\mu)\right)^*,
\]
where $\frac1p + \frac{1}{p'}=1$. In case $p=2$ we do not write $p$ explicitly. Note that $H^{-\sigma}(\mu)\subseteq \left(H^\sigma_\infty(\mu)\right)^*$ often being a strict inclusion.

 For the regularity in time of the solution we consider the following spaces frequently used in the literature: Let $0<\eta<1$ and $(X, \|\cdot\|_X)$ be a normed space. Then $W^\eta([0, t_0], X)$ denotes the space of functions $v:[0, t_0]\to X$ such that $\|v\|_{\eta, X}<\infty$, where
\[
\|v\|_{\eta, X}:= \sup_{0\leq t\leq t_0} \left(\|v(t)\|_X + \int_0^t \frac{\|v(t)-v(s)\|_X }{(t-s)^{\eta+1}}\ud s \right)
\]
is the norm in $W^\eta([0, t_0], X)$.\\

\vskip3mm
We will use the
{\it short notations} for the following norms:
$$\|\cdot\|_{\alpha, \infty}:= \|\cdot\|_{ H^\alpha_\infty (\mu)}~~\mbox{and}~~ \|\cdot\|_{\alpha}:= \|\cdot\|_{ H^{\alpha} (\mu)}~~\mbox{for each}~~ \alpha\in
\mathbb{R}\, .$$
Then we recall that for $\nu\ge 0$ and $t>0$ the operators $T(t)$ and  $A^\nu$ commute on $D(A^\nu)$ and satisfy the following well-known estimates (see e.g. \cite{Pa83}) for $u,v\in D(A^\nu)$:
\begin{equation} \label{eq: T(t)- extra regularity} \|T(t)v||_\nu\le c  t^{-\frac{\nu}{2}}\|v\|_0\, ,\end{equation}
and  for $0<\nu< 1$,
\begin{equation} \label{eq: T(t)- alpha continuity in L2} \|T(t)u-u\|_0\le c t^\nu\|u\|_{2\nu}\, ,\end{equation}
where $0<t\leq t_0$.\\
The symmetry of the semigroup $\{T(t), t\geq 0\}$ has been used in order to extend it to elements from the dual spaces.
If $w\in H^{-\beta}(\mu)$ then $T(t)w$ is the element of $L_2(\mu)$ determined by the duality relation
$$(v,T(t)w):=(T(t)v,w)\, ,~~v\in L_2(\mu)\, .$$
Then we get
$$|(v,T(t)w)|=|(T(t)v,w)|\le \|T(t)v\|_\beta \|w\|_{-\beta}$$
and hence,
$$\|T(t)w\|_0\le c t^{-\frac{\beta}{2}}\|w\|_{-\beta}$$
in view of \eqref{eq: T(t)- extra regularity}. Applying the latter again and using $T(t)=T(\frac{t}{2})\circ T(\frac{t}{2})$ we infer
\begin{equation}\label{eq: T(t) delta-beta bound}
\|T(t)w\|_\delta\le c t^{-\frac{\delta}{2}-\frac{\beta}{2}}\|w\|_{-\beta}
\end{equation}
for any $\delta,\beta>0$.\\
Similarly one obtains from \eqref{eq: T(t)- alpha continuity in L2}
\begin{equation}\label{eq: T(t)-alpha continuity}
\|T(t)w-w\|_{-\beta-2\nu}\le c t^\nu\|w\|_{-\beta}\, ,
\end{equation}
for any $\beta>0$, $w\in H^{-\beta}(\mu)$ and $0<\nu<1$.\vskip3mm
Note that the constants in the estimates depend on the related parameters.
\vskip3mm
Throughout the paper we make the following standing assumptions which are the same as in \cite{HZ12}:
\begin{description}
 \item[\textbf{Assumption (MMS)}] $(X,d)$ is a locally compact separable metric space. We consider the Borel $\sigma$-field on $X$ and a Radon measure $\mu$ on $(X,d)$.
\item[\textbf{Assumption (HKE($\beta$))}]
The transition kernel $P_t(x, dy)$ associated with the semigroup $T(t), t\geq0$ admits a transition density $P_t(x, dy) = p(t,x,y) \mu(dy)$ which satisfies for almost all $x, y \in X$ the following {\it heat kernel estimate}
\[
t^{-\frac{d_H}{w}} \Phi_1 (t^{-\frac{1}{w}}d(x,y) ) \leq p(t,x,y) \leq t^{-\frac{d_H}{w}} \Phi_2 (t^{-\frac{1}{w}}d (x,y) )
\]

if $0<t<R_0$ for some constants $R_0>0$, $w\ge 2$ and nonnegative bounded decreasing functions $\Phi_i$
on $[0,\infty)$, where $d_H$ is the {\it Hausdorff dimension} of $(X,d)$. For $t\ge R_0$, $$p(t,x,y)\le p_t$$ and $p_t$ decreases in $t$. (In this case the semigroup is {\it ultracontractive}, i.e.,$$ \|T(t)\|_{L_\infty(\mu)}\le p_t\|f\|_{L_2(\mu)}\, ,$$  where $p_t:=c\, t^{-d_S/4}$, if $t<R_0$, and the value $d_S=\frac{2d_H}{w}$ agrees with its {\it spectral dimension}. $w$ is called {\it walk dimension} of the semigroup.) For a given $\beta>0 $ we further assume the {\it integrability condition} $$\int_0^\infty s^{d_H+\beta w/2-1}\Phi_2(s) \mathrm d s<\infty\, .\footnote{In \cite{HZ12} the $w$ is missing in the exponent.}$$
\end{description}
Heat kernels of this type have been studied in Grigor'yan and Kumagai \cite{GK08} and related references therein. Further relationships are presented in the recent survey \cite{GHL14} of Grigor'yan, Hu and Lau.
\vskip3mm
In order to make the integral in \eqref{eq: introduction: mild sol} precise we need {\it pointwise products} of functions and dual elements from the potential spaces. In \cite{HZ12} the following is proved which also extends related results for the Euclidean case.
\begin{proposition} \label{prop: hinz zaehle_paraproducts} \cite[Corollary 4.1]{HZ12}\\
Suppose (MMS) and (HKE($\beta$))
for $0<\beta <\delta<\min(\frac{d_S}{2}, 1)$. Then for $q=\frac{d_S}{\delta}$ the product $gh$ of  $g\in H^\delta(\mu)$ and $h\in H^{-\beta}_q(\mu)$ is well defined in $H^{-\beta}(\mu)$ by the duality relation $(f,gh):= (fg,h)\, ,~ f\in H^\delta(\mu)$, and the following estimate holds true:
$$\|gh\|_{-\beta}\leq c \|g\|_\delta \|h\|_{H^{-\beta}_q(\mu)}\, .$$
\end{proposition}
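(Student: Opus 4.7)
My plan is to proceed by duality. Since $H^{-\beta}(\mu)=(H^\beta(\mu))^*$ and $H^\delta(\mu)$ is continuously and densely included in $H^\beta(\mu)$ (both being Bessel potential spaces with $\delta>\beta>0$, and a dense subset such as $D(A^k)$ for large $k$ lies in both), it suffices to show that the linear functional $f\mapsto(fg,h)$, originally defined for $f\in H^\delta(\mu)$, extends continuously to $f\in H^\beta(\mu)$ with norm bounded by $c\|g\|_\delta\|h\|_{H^{-\beta}_q(\mu)}$. Thus the proposition reduces to the a priori estimate
\[
|(fg,h)|\le c\,\|f\|_\beta\,\|g\|_\delta\,\|h\|_{H^{-\beta}_q(\mu)},\qquad f\in H^\delta(\mu).
\]

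To pull out the factor $\|h\|_{H^{-\beta}_q}$ I would use the duality between $H^\beta_{q'}(\mu)$ and $H^{-\beta}_q(\mu)$, where $q'=q/(q-1)=d_S/(d_S-\delta)$, reducing the problem further to the product bound
\[
\|fg\|_{H^\beta_{q'}(\mu)}\le c\,\|f\|_\beta\,\|g\|_\delta.
\]
For the $L_{q'}$-part of this norm I would apply H\"older's inequality with exponents $(2,r)$, where $1/r=1/q'-1/2=1/2-\delta/d_S$, and then invoke the Sobolev embedding $H^\delta(\mu)\hookrightarrow L_r(\mu)$, which follows from the ultracontractivity encoded in (HKE($\beta$)) combined with $\delta<d_S/2$. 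Observe that the specific choice $q=d_S/\delta$ is precisely what makes these H\"older and Sobolev exponents align. For the fractional piece of the $H^\beta_{q'}$-norm I would use a semigroup characterization in terms of weighted integrals of $\|T(t)(fg)-fg\|_{L_{q'}}$, decompose the increment via a Leibniz-type splitting such as
\[
T(t)(fg)-fg=(T(t)f-f)\,T(t)g+f\,(T(t)g-g),
\]
and bound each summand by a H\"older product: one factor supplies the time-regularity through estimates of the type \eqref{eq: T(t)- alpha continuity in L2}, while the other is controlled in $L_r$ via the same Sobolev embedding used in the $L_{q'}$-step.

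The main obstacle will be making this fractional Leibniz-type estimate rigorous in the metric measure space framework, where classical differentiation and Littlewood--Paley decompositions are unavailable and every manipulation must be expressed through the semigroup $\{T(t),t\ge 0\}$ and the heat kernel bounds in (HKE($\beta$)). The strict inequalities $0<\beta<\delta<\min(d_S/2,1)$ are exactly what keep all relevant exponents (H\"older, Sobolev, and the walk-dimension integrability baked into (HKE($\beta$))) strictly inside their admissible ranges, so no endpoint trouble arises and the chain of estimates closes with constants depending only on the parameters and on the semigroup.
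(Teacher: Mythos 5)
This proposition carries no proof in the present paper; it is imported verbatim from \cite[Corollary 4.1]{HZ12}, so your proposal can only be measured against the argument in that reference. Your overall skeleton does match it: reduce by duality to the a priori bound $|(fg,h)|\le c\|f\|_\beta\|g\|_\delta\|h\|_{H^{-\beta}_q(\mu)}$, pull out $\|h\|_{H^{-\beta}_q(\mu)}$ via the pairing with $H^\beta_{q'}(\mu)$, and prove a module estimate $\|fg\|_{H^\beta_{q'}(\mu)}\le c\|f\|_\beta\|g\|_\delta$ using H\"older together with the Sobolev embedding $H^\delta(\mu)\hookrightarrow L_r(\mu)$, $\frac1r=\frac12-\frac{\delta}{d_S}$, which is exactly where $q=\frac{d_S}{\delta}$ and $\delta<\frac{d_S}{2}$ enter. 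That part of the bookkeeping is correct.

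There are, however, two concrete problems in the step you flag as the ``main obstacle''. First, the displayed identity $T(t)(fg)-fg=(T(t)f-f)\,T(t)g+f\,(T(t)g-g)$ is false: its right-hand side telescopes to $T(t)f\cdot T(t)g-fg$, which equals $T(t)(fg)-fg$ only if $T(t)$ were multiplicative, and a heat semigroup is an integral operator against $p(t,x,y)\,\mu(dy)$, not an algebra homomorphism. The Leibniz splitting must instead be performed pointwise under the kernel integral, writing $T(t)(fg)(x)-f(x)g(x)=\int p(t,x,y)\bigl[f(y)\bigl(g(y)-g(x)\bigr)+g(x)\bigl(f(y)-f(x)\bigr)\bigr]\mu(\ud y)$, and this is essentially how \cite{HZ12} (building on the Strichartz-type characterizations in \cite{HZ09}) proceeds. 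Second, controlling $\|T(t)(fg)-fg\|_{L_{q'}}$ with a weight in $t$ characterizes a Besov scale $B^\beta_{q',\cdot}$, not the potential space $H^\beta_{q'}(\mu)=$ domain of $(A_{q'}+\I)^{\beta/2}$, and these differ for $q'\neq 2$; the cited proof avoids this by working with a square-function (mean-oscillation) characterization of $H^\sigma_p(\mu)$ for $0<\sigma<1$. Both defects are reparable — the pointwise splitting replaces your false identity, and the strict inequality $\beta<\delta$ leaves room to pass between the Besov and potential scales — but as written the central estimate of your argument does not go through. A final minor gap: the two Leibniz terms require different H\"older splittings (e.g.\ $f\in L_{p_1}$ with $\frac1{p_1}=\frac12-\frac{\beta}{d_S}$ against the term where the $\beta$-smoothness falls on $g$), not the single pair $(2,r)$ you propose for everything.
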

\vskip3mm

\subsection{The integral equation and mild solution}

A rigorous definition for the integral and a contraction principle for the solution to equation \eqref{eq: introduction: mild sol} are given in \cite{HZ12}  by means of fractional calculus in Banach spaces, in particular, under the following additional conditions.
\vskip3mm
\begin{description}
\item[\textbf{Assumption (FG)}]
The nonlinear functions $F$ and $G$ are such that $F\in C^1(\mathbb R^n)$, $F(0)=0$ and $F$ has bounded Lipschitz derivative $F'$ and $G\in C^2(\mathbb R^n), G(0)=0$ and $G$ has bounded Lipschitz second derivative $G^{''}$.
\end{description}
\vskip3mm
For the parameters we here consider the case II from \cite{HZ12}.
\begin{description}
\item[\textbf{Assumption (P)}]
$0<\alpha<\gamma$, $0<\beta<\delta<\min(\frac{d_S}{2},1)$, $\gamma<1-\alpha-\frac{\beta}{2}-\frac{d_S}{4}$, where $\beta$ and $d_S$ are from (HKE($\beta$)), and $q=\frac{d_S}{\delta}$.
\end{description}
\vskip3mm
We now will briefly summarize the construction.\vskip3mm
If $u\in W^\gamma([0,t_0], H^{\delta}_\infty(\mu) )$ the operator
 $U(t;s): H^{-\beta}_q(\mu)\to H^{\delta}_\infty(\mu) $ is defined as
\begin{equation}\label{eq: U(t;s)}
U(t;s)w:=T(t-s) \left( G(u(s))w \right)
\end{equation}
for $w\in H^{-\beta}_q(\mu)$. Then under the above assumptions on the function $G$  and the parameters (P) for any $0<\eta<\gamma$  the {\it left-sided Weyl-Marchaud fractional derivative} of order $\eta$ is determined by
\[
D^\eta_{0+} U(t;s) := \frac{{\1}_{(0,t)}(s)}{\Gamma(1-\eta)} \left( \frac{U(t;s)}{s^\eta}  + \eta \int_0^s \frac{U(t;s) -U(t;\tau)}{(s-\tau)^{\eta+1}} \mathrm d \tau \right)
\]
as an element of $L_1([0,t],L(H^{-\beta}_q(\mu),H^\delta(\mu))$ (in the sense of Bochner integration). This is shown in \cite[Lemma 5.2, (ii)]{HZ12}\footnote{We remark that there is a typo in \cite[Lemma 5.2]{HZ12}, namely in (ii) and (iii) the right hand side of the main condition on the parameters should read $2-2\eta -\beta$ instead of $2-2\eta -(\beta\vee\frac{d_S}{2})$.}.\\
Let us  now consider the regulated version of $z\in C^{1-\alpha}([0,t_0],H_q^{-\beta})$ on $[0,t]$ given by $z_t(s) := \1_{(0,t)}(s) (z(s)-z(t))$. If additionally $1-\eta<1-\alpha$, which is always possible in view of (P), one can define the {\it right-sided Weyl-Marchaud fractional derivative} of $z_t$ of order $1-\eta$ by
\[
D^{1-\eta}_{t-} z_t(s) := \frac{(-1)^{1-\eta}\1_{(0,t)}(s)}{\Gamma(\eta)} \left( \frac{z(s)-z(t)}{(t-s)^{1-\eta}}  + (1-\eta) \int_s^t \frac{z(s) -z(\tau)}{(\tau-s)^{(1-\eta)+1}} \mathrm d \tau \right)
\]
as an element of $L_\infty([0,t],H^{-\beta}_q(\mu))$.\\

For more details on these fractional derivative we refer the reader to  \cite{SKM93}, \cite{Za98}, \cite{Za01}, and \cite{HZ09_I} for the Banach space version. They are used for one of the results in \cite{HZ12}:

\begin{proposition} \label{prop: hinz zaehle}
Suppose (MMS), (HKE($\beta$)), (FG), the parameter conditions (P) and $z\in C^{1-\alpha}([0,t_0], H^{-\beta}_q(\mu))$. Then we have the following.
\begin{itemize}
\item[(a)]\cite[Lemma 5.1]{HZ12}
For the operator $U(t;x)=T(t-x)(G(u(x)\cdot)$ as in \eqref{eq: U(t;s)} with $u\in W^\gamma([0, t_0],H_\infty^\delta(\mu))$  the integral $\int_s^t U(t;x)\mathrm d z(x) $ is well defined by
\begin{equation}\label{eq: def of convolution intergal}
\int_s^t U(t;x)\mathrm d z(x) := (-1)^\eta\int_s^t D^\eta_{s+} U(t;x) D^{1-\eta}_{t-} z_t(x) \mathrm d x,
\end{equation}
independently of the choice of $\eta$ with $\eta<\gamma$ and $1-\eta<1-\alpha$. (In particular, the integrand on the right side is a Lebesgue integrable real function.)
\item[(b)] \cite[Theorem 1.2]{HZ12}
For any initial function $f\in H^{2\gamma+\delta+\varepsilon}(\mu)$ with some $\varepsilon>0$ there exists a unique solution $u$ to equation \eqref{eq: introduction: mild sol} for  the definition \eqref{eq: def of convolution intergal} of the integral such that $u\in W^\gamma([0, t_0],H^\delta_\infty(\mu))$.
\end{itemize}
\end{proposition}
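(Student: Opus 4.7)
The plan is to address the two parts separately: part (a) is an analytic well-posedness statement for a Bochner-type integral, while part (b) is a contraction-mapping argument. The key ingredients already available in the excerpt are the semigroup smoothing bounds \eqref{eq: T(t)- extra regularity}--\eqref{eq: T(t)-alpha continuity}, the pointwise-product estimate of Proposition \ref{prop: hinz zaehle_paraproducts}, and the Weyl--Marchaud fractional calculus in Banach spaces.

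For part (a), I would decompose $D^\eta_{s+} U(t;x)$ into its pointwise part $U(t;x)/(x-s)^\eta$ and its increment-integral part. For the pointwise part, Proposition \ref{prop: hinz zaehle_paraproducts} gives $\|G(u(x))w\|_{-\beta}\le c\|G(u(x))\|_\delta\|w\|_{H^{-\beta}_q}$, and \eqref{eq: T(t) delta-beta bound} upgrades this to a $(t-x)^{-(\delta+\beta)/2}$-singular operator bound in $L(H^{-\beta}_q,H^\delta)$. For the increment part, I would split
$$U(t;x)-U(t;\tau) = T(t-x)\bigl(G(u(x))-G(u(\tau))\bigr) + \bigl(T(t-x)-T(t-\tau)\bigr)G(u(\tau)),$$
controlling the first summand by the Lipschitz regularity of $G$ together with $u\in W^\gamma([0,t_0],H^\delta_\infty(\mu))$ (using $G(0)=0$ so that $G(u(\cdot))$ stays in $H^\delta$), and the second by \eqref{eq: T(t)-alpha continuity}. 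The other factor, $D^{1-\eta}_{t-}z_t$, is essentially bounded in $H^{-\beta}_q$ as a direct consequence of $z\in C^{1-\alpha}$ together with $1-\eta<1-\alpha$. Combining these two ingredients via Proposition \ref{prop: hinz zaehle_paraproducts} produces a Lebesgue-integrable real function, which makes \eqref{eq: def of convolution intergal} meaningful. Independence of the choice of $\eta$ would follow from a standard fractional integration-by-parts identity in the Banach-space setting.

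For part (b), I would define
$$\Phi(u)(t) := T(t)f + \int_0^t T(t-s)F(u(s))\,\ud s + \int_0^t T(t-s)G(u(s))\,\ud z(s),$$
and show that $\Phi$ is a self-map and a contraction on a closed ball of $W^\gamma([0,t_0],H^\delta_\infty(\mu))$ for $t_0$ small, then extend globally by concatenating on a partition of $[0,t_0]$. The term $T(t)f$ is handled by \eqref{eq: T(t)- extra regularity}--\eqref{eq: T(t)- alpha continuity in L2} applied to $f\in H^{2\gamma+\delta+\varepsilon}(\mu)$, with ultracontractivity controlling the $L_\infty$-part. The drift term is routine, using the Lipschitz regularity of $F$ with $F(0)=0$ together with the semigroup smoothing. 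The noise term is treated through the representation in part (a): the $H^\delta$-norm of the integral is estimated by pairing the operator-norm bound on $D^\eta_{s+}U(t;x)$ with the essential-sup bound on $D^{1-\eta}_{t-}z_t$ via Proposition \ref{prop: hinz zaehle_paraproducts}. The budget in (P) makes the resulting singular kernels jointly integrable, and selecting $\eta\in(\alpha,\gamma)$ is possible thanks to $\alpha<\gamma$.

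The hard part will be the stochastic convolution in (b): one must simultaneously absorb the $(t-x)^{-(\delta+\beta)/2}$ singularity from the $H^{-\beta}\to H^\delta$ smoothing, the $(x-s)^{-\eta}$ singularity from the fractional derivative, and the additional $(t-s)^{-d_S/4}$ needed to push the output into $L_\infty$, while still producing a genuine $\gamma$-Hölder time modulus. Closing the contraction in the combined norm of $W^\gamma([0,t_0],H^\delta(\mu))$ and $L_\infty(\mu)$ forces precisely the parameter inequality $\gamma<1-\alpha-\frac{\beta}{2}-\frac{d_S}{4}$ stipulated in (P), which is also the source of the low-dimensional constraint $d_S<4$. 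Once these a priori estimates hold, local existence and uniqueness follow from Banach's fixed-point theorem, and iteration on successive subintervals yields the solution on the whole of $[0,t_0]$.
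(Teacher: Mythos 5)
This proposition is not proved in the paper at all: it is imported verbatim from \cite{HZ12} (Lemma 5.1 and Theorem 1.2 there), so there is no in-paper argument to compare against. Your outline is a faithful reconstruction of the strategy of that reference --- splitting the Weyl--Marchaud derivative into its pointwise and increment parts, decomposing $U(t;x)-U(t;\tau)$ into a $G$-increment term and a semigroup-increment term, pairing with the essential-sup bound on $D^{1-\eta}_{t-}z_t$ via Proposition \ref{prop: hinz zaehle_paraproducts}, and closing a fixed-point argument under (P) --- and indeed the same estimates reappear almost verbatim in the paper's own Lemmas \ref{lm: epsilon argument} and \ref{lm: holder regularity of G term}, so I have nothing substantive to object to.
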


\section{The main result}
\subsection{Regularity of the solution}\label{subsc: regularity}

The main results are stated in Theorem \ref{thm: gamma holder regularity} and Corollary \ref{cor: extra holder regularity}.
\vskip3mm
In this section we use {\it short notations} for the following norms: $$\|\cdot\|_L:= \|\cdot\|_{L(H^{-\beta}_q(\mu),H^\delta(\mu))}~~,~~\|\cdot\|_{W^\gamma}:=\|\cdot\|_{W^\gamma([0, t_0], H^\delta_\infty(\mu))}$$
with the specified parameters as in Assumption (P).\\
First recall that under Assumption (FG) the nonlinear operators $F$ and $G$ are bounded from $H^\delta_\infty(\mu)$ into itself (see \cite[Proposition 3.1]{HZ12}).
\begin{lemma}\label{lm: holder regularity of F term}
Suppose (MMS), (HKE($\beta$)), (FG), the parameter conditions (P) and let $z\in C^{1-\alpha}([0,t_0], H^{-\beta}_q(\mu))$. Then there is a positive constant $c$ such that
 \begin{equation} \label{eq: holder regularity of F term}
\left\| \int_s^t T(t-x) F(u(x))\mathrm d x \right\|_{ \delta} \leq  c||u||_{W^\gamma}(t-s).
 \end{equation}
\end{lemma}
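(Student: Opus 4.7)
The plan is to pull the $H^\delta(\mu)$-norm inside the Bochner integral, bound the integrand by the $H^\delta$-norm of $F(u(x))$ using contractivity of the semigroup on $H^\delta(\mu)$, and finally invoke the Nemytskii-type bound on $F$ from \cite[Proposition 3.1]{HZ12}.

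First I would apply the triangle inequality for Bochner integrals to obtain
\[
\left\| \int_s^t T(t-x) F(u(x))\, \mathrm{d}x \right\|_\delta \leq \int_s^t \|T(t-x) F(u(x))\|_\delta\, \mathrm{d}x.
\]
To control the integrand I would deliberately avoid the smoothing estimate \eqref{eq: T(t)- extra regularity}, which would introduce a factor $(t-x)^{-\delta/2}$ and only produce the suboptimal rate $(t-s)^{1-\delta/2}$. Instead, since $F(u(x))$ is already in $H^\delta(\mu)$, I would exploit that $T(t-x)$ acts as a contraction on $H^\delta(\mu)$: because $T$ commutes with the fractional power $A^{\delta/2}$ on $D(A^{\delta/2})=H^\delta(\mu)$ and is an $L_2(\mu)$-contraction (Markovian symmetric semigroup), one has
\[
\|T(t-x) w\|_\delta = \|T(t-x) w\|_0 + \|T(t-x) A^{\delta/2} w\|_0 \leq \|w\|_\delta
\]
for every $w \in H^\delta(\mu)$. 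Establishing this contraction property on the fractional domain (rather than relying solely on the smoothing estimate already recorded in the preliminaries) is the single mildly subtle point.

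Applying the above to $w = F(u(x))$ and using that $F:H^\delta_\infty(\mu)\to H^\delta_\infty(\mu)$ is bounded under (FG) by Proposition 3.1 of \cite{HZ12}, so that $\|F(u(x))\|_\delta \leq \|F(u(x))\|_{\delta,\infty} \leq c\|u(x)\|_{\delta,\infty}$, one obtains
\[
\|T(t-x) F(u(x))\|_\delta \leq c\,\|u(x)\|_{\delta,\infty}.
\]
Since $\sup_{0\leq x\leq t_0}\|u(x)\|_{\delta,\infty} \leq \|u\|_{W^\gamma}$ is built into the definition of the $W^\gamma$-norm, integrating over $[s,t]$ produces the linear factor $(t-s)$ and yields \eqref{eq: holder regularity of F term}. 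No further ingredients are needed.
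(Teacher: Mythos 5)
Your proposal is correct and follows essentially the same route as the paper's proof: pull the norm inside the Bochner integral, use that $T(t-x)$ is a contraction on $H^\delta(\mu)$ (the paper states this directly; you justify it via commutation with $A^{\delta/2}$ and $L_2$-contractivity, which is the right justification), apply the mapping property of $F$ from \cite[Proposition 3.1]{HZ12}, and integrate using the uniform bound built into the $W^\gamma$-norm. No discrepancies.
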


\begin{proof}
Since the semigroup $T(t)$ is a contraction on $H^\delta(\mu)$ we have
\begin{align*}
 \left\| \int_s^t T(t-x) F(u(x))\mathrm d x \right\|_{ \delta}
 \leq&  \int_s^t \| T(t-x) F(u(x)) \|_{ \delta}  \mathrm d x \\
 \leq&  \int_s^t \|F(u(x)) \|_{\delta}  \mathrm d x\\
 \leq&  c \int_s^t  \|u(x) \|_{\delta}  \mathrm d x \leq  c \|u\|_{W^\gamma} (t-s),
\end{align*}
where the latter bound follows from the definition of the $W^\gamma$-norm.
\end{proof}

\begin{lemma}\label{lm: epsilon argument}
Under the same conditions as in Lemma \ref{lm: holder regularity of F term} we have
 \begin{align*}
  \| T(t-x)& \left( G(u(x)) \right) - T(t-y) \left( G(u(x)) \right) \|_L \\
  &\leq c ||u||_{W^\gamma} (t-x)^{-\frac\delta2-\frac\beta2-\nu}  (x-y)^\nu
 \end{align*}
 for any $0<\nu<1$.
 \end{lemma}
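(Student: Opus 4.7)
The plan is to reduce the statement to a bound on $\|T(t-x)v - T(t-y)v\|_\delta$ for an arbitrary $v\in H^{-\beta}(\mu)$, then exploit the semigroup property to split off the $(x-y)^\nu$ factor from the $(t-x)^{-\delta/2-\beta/2-\nu}$ factor.

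First, fix $w\in H^{-\beta}_q(\mu)$ and assume (without loss of generality) $0\le y\le x<t$. By Proposition \ref{prop: hinz zaehle_paraproducts} applied pointwise in time, the product $v:=G(u(x))\cdot w$ is a well-defined element of $H^{-\beta}(\mu)$ with
\[
\|v\|_{-\beta}\ \le\ c\,\|G(u(x))\|_\delta\,\|w\|_{H^{-\beta}_q(\mu)}.
\]
Thus it suffices to estimate $\|T(t-x)v-T(t-y)v\|_\delta$ in terms of $\|v\|_{-\beta}$ and then use the boundedness of $G$ from $H^\delta_\infty(\mu)$ into itself (\cite[Proposition 3.1]{HZ12}) together with the definition of the $W^\gamma$-norm to bound $\|G(u(x))\|_\delta\le c\|u(x)\|_{\delta,\infty}\le c\|u\|_{W^\gamma}$.

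Second, using the semigroup identity $T(t-y)=T(x-y)\circ T(t-x)$ I write
\[
T(t-y)v-T(t-x)v\ =\ \bigl(T(x-y)-\I\bigr)\,T(t-x)v.
\]
Setting $g:=T(t-x)v$, the smoothing estimate \eqref{eq: T(t) delta-beta bound} (applied with $\delta$ replaced by $\delta+2\nu$) gives
\[
\|g\|_{\delta+2\nu}\ \le\ c\,(t-x)^{-\frac{\delta}{2}-\nu-\frac{\beta}{2}}\,\|v\|_{-\beta}.
\]
It remains to transfer the $L_2$-continuity estimate \eqref{eq: T(t)- alpha continuity in L2} to the $H^\delta$-level. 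Since $A^{\delta/2}$ commutes with $T(s)$ on $D(A^{\delta/2})=H^\delta(\mu)$ and since $\|\cdot\|_\delta$ is equivalent to $\|\cdot\|_0+\|A^{\delta/2}\cdot\|_0$, I apply \eqref{eq: T(t)- alpha continuity in L2} both to $g$ and to $A^{\delta/2}g$ to obtain
\[
\|(T(x-y)-\I)g\|_\delta\ \le\ c\,(x-y)^\nu\bigl(\|g\|_{2\nu}+\|A^{\delta/2}g\|_{2\nu}\bigr)\ \le\ c\,(x-y)^\nu\,\|g\|_{\delta+2\nu}.
\]

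Putting these together yields
\[
\|T(t-x)v-T(t-y)v\|_\delta\ \le\ c\,(t-x)^{-\frac{\delta}{2}-\frac{\beta}{2}-\nu}\,(x-y)^\nu\,\|v\|_{-\beta},
\]
and combining with the bound on $\|v\|_{-\beta}$ and on $\|G(u(x))\|_\delta$ gives the claim after dividing by $\|w\|_{H^{-\beta}_q(\mu)}$ and taking the supremum.

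The main technical point is the fourth step: lifting the $L_2$-continuity bound \eqref{eq: T(t)- alpha continuity in L2} to the $H^\delta$-norm via commutation of $T(s)$ with $A^{\delta/2}$. Everything else is a bookkeeping exercise that balances the smoothing exponent $\delta+2\nu$ in \eqref{eq: T(t) delta-beta bound} against the continuity exponent $2\nu$ in the $H^\delta$-version of \eqref{eq: T(t)- alpha continuity in L2}, which together produce exactly the claimed factor $(t-x)^{-\delta/2-\beta/2-\nu}(x-y)^\nu$.
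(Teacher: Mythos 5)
Your proof is correct and follows essentially the same route as the paper: both factor $T(t-y)=T(t-x)\circ T(x-y)$ and combine the smoothing estimate \eqref{eq: T(t) delta-beta bound} with the H\"older-type continuity of the semigroup, trading the same $2\nu$ orders of regularity against the factor $(x-y)^\nu$. The only difference is bookkeeping: the paper applies the continuity estimate on the negative-order side, in the already-derived dual form \eqref{eq: T(t)-alpha continuity} mapping $H^{-\beta}(\mu)$ into $H^{-\beta-2\nu}(\mu)$ before smoothing up to $H^\delta(\mu)$, whereas you smooth first up to $H^{\delta+2\nu}(\mu)$ and then lift \eqref{eq: T(t)- alpha continuity in L2} to the $H^\delta(\mu)$-level by commuting $A^{\delta/2}$ with the semigroup --- both orderings yield identical exponents.
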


 \begin{proof}
Let $h$ be an arbitrary element of $H^{-\beta}_q(\mu)$ with $\|h\|_{H^{-\beta}_q(\mu)}\le 1$. Then Proposition \ref{prop: hinz zaehle_paraproducts} implies
 $$\|G(u(x))h\|_{-\beta} \le c \|u\|_{W^\gamma}$$
 uniformly in the time argument $x\in[0,t_0]$ by the mapping property of $G$ and the definition of the $W^\gamma$-norm. Using \eqref{eq: T(t) delta-beta bound}, \eqref{eq: T(t)-alpha continuity} and the last estimate we infer
\begin{align*}
\| T(t-x) & (\mathrm{Id} - T(x-y) ) G(u(x))h \|_{\delta}\\
&\le c (t-x)^{-\delta/2-\beta/2-\nu}  \|(\mathrm{Id} - T(x-y) ) G(u(x))h \|_{-\beta-2\nu}\\
&\le c (t-x)^{-\delta/2-\beta/2-\nu} (x-y)^\nu\|G(u(x))h\|_{-\beta}\\
&\le c (t-x)^{-\delta/2-\beta/2-\nu} (x-y)^\nu\|u\|_{W^\gamma}\, .
\end{align*}

This bound  together with the definition of the $L$-norm
\begin{align*}
 \| T(t-x) & (\mathrm{Id} - T(x-y) ) G(u(x)) \|_{L}\\
 &= \sup_{\|h\|_{H^{-\beta}_q}\leq 1}  \| T(t-x) (\mathrm{Id} - T(x-y) ) \left( G(u(x)) h\right) \|_{\delta}
 \end{align*}
completes the proof.
\end{proof}

\begin{lemma}\label{lm: holder regularity of G term}
Suppose (MMS), (HKE($\beta$)), (FG), the parameter conditions (P) and let $z\in C^{1-\alpha}([0,t_0], H^{-\beta}_q(\mu))$. Then there is a positive constant $c$ such that
\begin{equation}\label{eq: holder regularity of G term}
\left\| \int_s^t T(t-x)  G(u(x)) \mathrm d z(x) \right\|_{ \delta} \leq c (t-s)^\gamma.
 \end{equation}
\end{lemma}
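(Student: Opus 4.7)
The plan is to work with the rigorous definition of the integral from Proposition \ref{prop: hinz zaehle}(a). First I would choose parameters $\eta,\nu$ with $\alpha<\eta<\gamma$ and $\eta<\nu<\eta+1-\alpha-(\delta+\beta)/2$; such a $\nu$ exists because (P) ensures $\alpha+\beta/2+\delta/2<1$. Writing
$$\int_s^t T(t-x)G(u(x))\,\mathrm d z(x)=(-1)^\eta\int_s^t D^\eta_{s+}U(t;x)\,D^{1-\eta}_{t-}z_t(x)\,\mathrm d x$$
and passing the $H^\delta$-norm inside the integral (using the operator norm $\|\cdot\|_L$) reduces the problem to controlling the two factors separately.

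The Hölder regularity of $z$ together with $\eta>\alpha$ gives the standard estimate $\|D^{1-\eta}_{t-}z_t(x)\|_{H^{-\beta}_q(\mu)}\le c(t-x)^{\eta-\alpha}$ directly from the definition of the Marchaud derivative. The harder step is the bound on $\|D^\eta_{s+}U(t;x)\|_L$. For the pointwise piece $(x-s)^{-\eta}\|U(t;x)\|_L$, combine Proposition \ref{prop: hinz zaehle_paraproducts} on pointwise products, the semigroup bound \eqref{eq: T(t) delta-beta bound}, and the $W^\gamma$-bound on $G(u(\cdot))$ (via Assumption (FG) and the mapping property of $G$ on $H^\delta_\infty(\mu)$), obtaining the factor $(t-x)^{-(\delta+\beta)/2}\|u\|_{W^\gamma}$. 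For the integral piece I decompose
$$U(t;x)-U(t;\tau)=T(t-x)\bigl(G(u(x))-G(u(\tau))\bigr)+T(t-x)\bigl(\I-T(x-\tau)\bigr)G(u(\tau));$$
the first summand is treated by the Lipschitz continuity of $G$ on $H^\delta_\infty(\mu)$ combined with Proposition \ref{prop: hinz zaehle_paraproducts} and \eqref{eq: T(t) delta-beta bound}; after integrating against $(x-\tau)^{-\eta-1}$, the inequality $(x-\tau)^{-\eta-1}\le t_0^{\gamma-\eta}(x-\tau)^{-\gamma-1}$ combined with the defining integral in the $W^\gamma$-norm produces a bound $c\|u\|_{W^\gamma}(t-x)^{-(\delta+\beta)/2}$. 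The second summand is exactly what Lemma \ref{lm: epsilon argument} delivers, and the hypothesis $\nu>\eta$ makes the resulting integral in $\tau$ convergent and equal to $c\|u\|_{W^\gamma}(x-s)^{\nu-\eta}(t-x)^{-(\delta+\beta)/2-\nu}$. Collecting,
$$\|D^\eta_{s+}U(t;x)\|_L\le c\|u\|_{W^\gamma}\Bigl[(x-s)^{-\eta}(t-x)^{-(\delta+\beta)/2}+(x-s)^{\nu-\eta}(t-x)^{-(\delta+\beta)/2-\nu}\Bigr].$$

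Inserting this together with the bound on $D^{1-\eta}_{t-}z_t$ into the original expression yields two Beta-type integrals in $x\in[s,t]$, each evaluating to a multiple of $(t-s)^{1-\alpha-(\delta+\beta)/2}$. Since the parameter condition gives $\gamma<1-\alpha-\beta/2-d_S/4\le 1-\alpha-(\delta+\beta)/2$ and $t-s\le t_0$ is bounded, the claim \eqref{eq: holder regularity of G term} follows. I expect the main obstacle to be precisely the estimate of $\|D^\eta_{s+}U(t;x)\|_L$, which forces the simultaneous use of Proposition \ref{prop: hinz zaehle_paraproducts}, the two semigroup smoothing bounds \eqref{eq: T(t) delta-beta bound}--\eqref{eq: T(t)-alpha continuity} (packaged into Lemma \ref{lm: epsilon argument}), and a careful choice of $\eta$ and $\nu$ making every intermediate integral convergent with the correct final exponent.
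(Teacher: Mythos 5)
Your argument is correct and follows essentially the same route as the paper: the fractional-derivative definition of the integral, the split of $D^\eta_{s+}U(t;x)$ into its pointwise and integral parts, the telescoping of $U(t;x)-U(t;\tau)$ into a $G$-difference term (controlled by the Lipschitz property of $G$ and the $W^\gamma$-seminorm of $u$) and a semigroup-difference term (controlled by Lemma \ref{lm: epsilon argument}), and a final Beta-integral computation. The only cosmetic differences are that you use the sharper bound $\|D^{1-\eta}_{t-}z_t(x)\|_{H^{-\beta}_q(\mu)}\le c(t-x)^{\eta-\alpha}$ where the paper is content with a uniform constant, and your telescoping pivots on $T(t-x)G(u(\tau))$ rather than the paper's $T(t-y)G(u(x))$; both choices lead to the same exponent $1-\alpha-\frac{\delta}{2}-\frac{\beta}{2}>\gamma$.
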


\begin{proof}
Since by assumption $z\in C^{1-\alpha}([0,t_0], H^{-\beta}_q(\mu))$ then for any $\eta$ such that $1-\eta < 1-\alpha$ we have
$$\sup_{t\in[0,t_0]}\sup_{x\in[0,t]} \left\| D^{1-\eta}_{t-} z_t(x) \right\|_{H^{-\beta}_q(\mu)} \leq c<\infty\, .$$  Let us fix $\eta$ throughout the proof as some number slightly bigger than $\alpha$ such that $\alpha<\eta<\gamma$ and at the same time $\gamma<1-\eta-\frac{\delta}{2} - \frac{\beta}{2}$ which is always possible in view of (P). We then get
\begin{align*}
&\left\| \int_s^t U(t;x)\mathrm d z(x) \right\|_{\delta} \\
=&\left\|  \int_s^t D^\eta_{s+} U(t;x) D^{1-\eta}_{t-} z_t(x) \mathrm d x \right\|_{\delta} \\
\leq& \sup_{t\in[0,t_0]}\sup_{x\in[0,t]} \left\|   D^{1-\eta}_{t-} z_t(x) \right\|_{H^{-\beta}_q(\mu)}  \int_s^t \left\|  D^\eta_{s+} U(t;x)  \right\|_{L} \mathrm d x  \\
\leq& c   \int_s^t \left\|  D^\eta_{s+} T(t-x)  G(u(x))  \right\|_{L} \mathrm d x  \\
\le& c \int_s^t \frac{\| T(t-x) G(u(x))  \|_L}{(x-s)^{\eta} }\mathrm d x  \\
&+  c \int_s^t \int_s^x  \frac{\| T(t-x) G(u(x))  - T(t-y)  G(u(y)) \|_L}{(x-y)^{1+\eta} } \mathrm d y \mathrm d x \\
=&: S_1 + S_2.
\end{align*}
Consider $S_1$ first. Using \eqref{eq: T(t) delta-beta bound} and Proposition \ref{prop: hinz zaehle_paraproducts} we obtain
\begin{align*}
 S_1 \leq &c\int_s^t \sup_{\|w\|_{H^{-\beta}_q(\mu)}\leq 1}\frac{\| T(t-x)\left( G(u(x))w\right) \|_{\delta} }{(x-s)^{\eta}}\mathrm d x\\
\leq & c \int_s^t (t-x)^{-\frac\delta2 - \frac\beta2} \sup_{\|w\|_{H^{-\beta}_q(\mu)} \leq 1}\frac{\| G(u(x)) w \|_{-\beta}}{(x-s)^{\eta}}\mathrm d x\\
\leq & c \int_s^t (t-x)^{-\frac\delta2 - \frac\beta2}  (x-s)^{-\eta} \| G(u(x)) \|_{\delta}   \sup_{\|w\|_{H^{-\beta}_q(\mu)}\leq 1} \|w\|_{H^{-\beta}_q(\mu)} \mathrm d x\\
\leq & c \int_s^t (t-x)^{-\frac\delta2 - \frac\beta2}  (x-s)^{-\eta} \|u\|_{W^\gamma}  \mathrm d x\\
\leq & c  (t-s)^{1-\frac\delta2 - \frac\beta2-\eta} \leq c(t-s)^\gamma,
\end{align*}
the latter following from $1-\frac\delta2 - \frac\beta2-\eta>\gamma$ by construction. Moreover the integral is finite since $\gamma>0$.\\
Consider $S_2$. The numerator inside the integral can be bounded as follows
\begin{align}\label{eq: difference of T}
\| T(t-x) &  G(u(x))  - T(t-y)  G(u(y))  \|_L \\ \nonumber
\leq& \| T(t-x)  G(u(x))  - T(t-y)  G(u(x))  \|_L \\ \nonumber
+&\| T(t-y)  G(u(x))  - T(t-y)  G(u(y))  \|_L,
\end{align}
so that we have
\begin{align*}
S_2=& c \int_s^t \int_s^x  \frac{\| T(t-x)  G(u(x))  - T(t-y)  G(u(y))  \|_L}{(x-y)^{1+\eta} } \mathrm d y \mathrm d x \\
 \leq & c \int_s^t \int_s^x  \frac{\| T(t-x)  G(u(x))  - T(t-y)  G(u(x))  \|_L}{(x-y)^{1+\eta} } \mathrm d y \mathrm d x \\
 +& c \int_s^t \int_s^x  \frac{\| T(t-y)  G(u(x))  - T(t-y)  G(u(y))  \|_L}{(x-y)^{1+\eta} } \mathrm d y \mathrm d x\\
 =: & S_3 + S_4.
\end{align*}
Let us consider the term $S_4$ first. We have (with similar computations as for $S_1$)
\begin{align*}
 S_4=&  c \int_s^t \int_s^x  \frac{\| T(t-y)G(u(x)) - T(t-y)  G(u(y))  \|_L}{(x-y)^{1+\eta} } \mathrm d y \mathrm d x\\
 \leq & c \int_s^t \int_s^x \sup_{\|w\|_{H^{-\beta}_q}\leq1} \frac{\| T(t-y) \left( \left[ G(u(x)) -G(u(y))\right]w\right)   \|_{\delta}}{(x-y)^{1+\eta} } \mathrm d y \mathrm d x\\
\leq & c \int_s^t(t-x)^{-\frac\delta2 - \frac\beta2} \int_s^x  \frac{\| G(u(x)) - G(u(y))  \|_{\delta}}{(x-y)^{1+\eta} } \mathrm d y \mathrm d x\\
\intertext{recall that $\eta<\gamma$ by definition of $\eta$, thus}
\leq & c \int_s^t(t-x)^{-\frac\delta2 - \frac\beta2} \| u \|_{W^\gamma}  \mathrm d x\\
\leq & c (t-s)^{1-\frac\delta2 - \frac\beta2}  \leq c (t-s)^\gamma,
\end{align*}
 the latter being true as $\gamma<1-\frac\delta2 - \frac\beta2$ by assumption.
 Regarding the term $S_3$, we apply Lemma \ref{lm: epsilon argument} with $\nu>\eta$ to the numerator inside the integral of  $S_3$, so that $S_3$ can be bounded by
\begin{align*}
S_3 & \leq c \int_s^t \int_s^x  \frac{\| T(t-x)  G(u(x))  - T(t-y)  G(u(x))  \|_L}{(x-y)^{1+\eta} } \mathrm d y \mathrm d x \\
 & \leq c \int_s^t \int_s^x  \frac{(t-x)^{-\frac\delta2 - \frac\beta2-\nu}  (x-y)^\nu}{(x-y)^{1+\eta} } \mathrm d y \mathrm d x \\
 & \leq c \int_s^t (t-x)^{-\frac\delta2 - \frac\beta2-\nu} \int_s^x  (x-y)^{\nu -1-\eta}  \mathrm d y \mathrm d x \\
 & \leq c \int_s^t (t-x)^{-\frac\delta2 - \frac\beta2-\nu}   (x-s)^{\nu-\eta} \mathrm d x \\
 & \leq c (t-s)^{1-\frac\delta2 - \frac\beta2-\nu+\nu - \eta} \leq c (t-s)^\gamma,
\end{align*}
the latter bound being true as  $\gamma<1-\eta-\frac\delta2 - \frac\beta2$. The proof is complete.
\end{proof}

We are now about to state and prove the main regularity property of the solution $u$ under the Assumptions (MMS), (HKE($\beta$)), (FG) and (P).

\begin{theorem}\label{thm: gamma holder regularity}
Suppose (MMS), (HKE($\beta$)) and (FG).  Let $0<\alpha<\gamma$ , $0<\beta < \delta<\min(\frac{d_S}{2},1)$ and $\gamma<1-\alpha-\frac{\beta}{2} - \frac{d_S}{4}$ and set $q=\frac{d_S}{\delta}$. If $z\in C^{1-\alpha}([0,t_0], H^{-\beta}_q(\mu))$ and  the initial condition $f$ is an element of $H^{\delta+2\gamma+\varepsilon}(\mu)$ for some $\varepsilon>0$, then the unique solution $u \in W^\gamma([0, t_0], H^\delta_\infty(\mu))$ for  \eqref{eq: introduction: mild sol} is also an element of $C^{\gamma}([0, t_0], H^{\delta}(\mu))$.
\end{theorem}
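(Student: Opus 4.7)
The plan is to base the proof on the flow identity
\[
u(t) - u(s) = (T(t-s) - \I)\, u(s) + \int_s^t T(t-x)\, F(u(x))\, \mathrm d x + \int_s^t T(t-x)\, G(u(x))\, \mathrm d z(x),
\]
valid for $0 \leq s \leq t \leq t_0$, which one obtains by substituting the mild expressions for $u(t)$ and $u(s)$ and invoking the semigroup property $T(t-x) = T(t-s)T(s-x)$, the additivity of the Weyl-Marchaud fractional integral across $[0,s] \cup [s,t]$, and the commutation of the bounded operator $T(t-s)$ on $H^\delta(\mu)$ with the Bochner integrals defining the noise convolution. The two ``tail'' integrals on the right are already controlled in $H^\delta(\mu)$: Lemma \ref{lm: holder regularity of F term} gives a bound of order $(t-s) \leq t_0^{1-\gamma}(t-s)^\gamma$, and Lemma \ref{lm: holder regularity of G term} gives directly $c(t-s)^\gamma$.

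The task therefore reduces to estimating $(T(t-s) - \I)\, u(s)$ in $H^\delta(\mu)$. For this I plan to use the interpolation inequality
\[
\|(T(h) - \I)\, v\|_\delta \leq c\, h^\gamma\, \|v\|_{\delta + 2\gamma},
\]
which follows from \eqref{eq: T(t)- alpha continuity in L2} applied in the shifted scale after commuting $T(h)$ with $A^{\delta/2}$ on $D(A^{\delta/2})$. This reduces everything to showing the uniform spatial-regularity bound $\sup_{s \in [0, t_0]} \|u(s)\|_{\delta + 2\gamma} < \infty$.

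To establish the uniform bound I would estimate each summand of the mild form \eqref{eq: introduction: mild sol} in the $H^{\delta + 2\gamma}(\mu)$-norm. The initial-value term satisfies $\|T(s) f\|_{\delta + 2\gamma} \leq \|f\|_{\delta + 2\gamma}$ by contractivity of $T(s)$ on $H^{\delta + 2\gamma}(\mu)$, which is finite thanks to the hypothesis $f \in H^{\delta + 2\gamma + \varepsilon}(\mu)$. The deterministic convolution is controlled by the smoothing bound $\|T(s-x)\, F(u(x))\|_{\delta + 2\gamma} \leq c(s-x)^{-\gamma}\, \|F(u(x))\|_\delta$ coming from \eqref{eq: T(t)- extra regularity} combined with the boundedness of $F$ on $H^\delta_\infty(\mu) \subset H^\delta(\mu)$; this is integrable on $[0,s]$ because $\gamma < 1$.

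The hard part is showing that the stochastic convolution $V(s) := \int_0^s T(s-x)\, G(u(x))\, \mathrm d z(x)$ is uniformly bounded in $H^{\delta + 2\gamma}(\mu)$. My plan is to rerun the proof of Lemma \ref{lm: holder regularity of G term} (together with its companion Lemma \ref{lm: epsilon argument}) with the target norm $\|\cdot\|_\delta$ replaced by $\|\cdot\|_{\delta + 2\gamma}$ throughout: \eqref{eq: T(t) delta-beta bound} then produces the exponent $-\delta/2 - \gamma - \beta/2$ in place of $-\delta/2 - \beta/2$, and the analogues of $S_1$, $S_3$, $S_4$ remain finite provided the fractional-derivative parameter $\eta$ can be chosen with $\alpha < \eta < \gamma$ and $\eta < 1 - \delta/2 - \gamma - \beta/2$. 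This range is non-empty under (P), since the constraint $\gamma < 1 - \alpha - \beta/2 - d_S/4$ together with $\delta/2 < d_S/4$ forces $1 - \delta/2 - \gamma - \beta/2 > 1 - \gamma - \beta/2 - d_S/4 > \alpha$. With $\sup_s \|V(s)\|_{\delta + 2\gamma} < \infty$ in hand, the uniform bound on $\|u(s)\|_{\delta + 2\gamma}$ follows and the theorem is proved.
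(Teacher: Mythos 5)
Your proposal is correct and follows essentially the same route as the paper's own proof: the same flow identity, the same use of Lemmas \ref{lm: holder regularity of F term} and \ref{lm: holder regularity of G term} for the two convolution terms, the same reduction of the initial-value term to the uniform bound $\sup_s\|u(s)\|_{\delta+2\gamma}<\infty$ via \eqref{eq: T(t)- alpha continuity in L2}, and the same three-term estimate of that bound with the shifted exponent $-\delta/2-\beta/2-\gamma$ in the noise term. Your explicit verification that the admissible range for $\eta$ is non-empty under (P) is a welcome detail that the paper leaves implicit.
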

\begin{remark}
For $u \in W^\gamma([0, t_0], H^\delta_\infty(\mu))$ the second integral in equation \eqref{eq: introduction: mild sol} has to be interpreted as above. For $u$ considered as element of $C^{\gamma}([0, t_0], H^{\delta}(\mu))$ this integral agrees with the corresponding Riemann-Stieltjes integral with values in the Banach space $H^{\delta}(\mu)$. The latter has been used in Gubinelly, Lejay and Tindel \cite{GLT06} in an abstract setting in the sense of Young.
\end{remark}
\begin{proof}
Let $0\leq s<t\leq t_0$.  We consider the solution at time $t$ as the evolution of  $u$  according to \eqref{eq: introduction: Cauchy pb} with initial condition at time $s$ being $u(s)$, that is
\[
u(t)= T(t-s)u(s) + \int_s^t T(t-r)F(u(r)) \ud r + \int_s^t T(t-r)G(u(r)) \ud z( r),
\]
so that
\begin{align}\label{eq: difference ut-us}
u(t)-u(s) =&( T(t-s)- \I) u(s) \nonumber\\
+& \int_s^t T(t-r)F(u(r)) \ud r + \int_s^t T(t-r)G(u(r)) \ud z( r)
\end{align}
The $H^\delta(\mu)$-norms of the two integrals are bounded by $ c (t-s)$ and $ c (t-s)^\gamma$ according to Lemma \ref{lm: holder regularity of F term} and Lemma \ref{lm: holder regularity of G term}, respectively.  If we show that $\|u(t)\|_{\delta+2\gamma}<c$ uniformly in $t$ for $t\in[0, t_0]$, then the $H^\delta(\mu)$-norm of the term involving the initial condition can be easily bounded. In fact note that in this case $(A+\I)^{\frac\delta2}u\in D(A^{\gamma}) $ thus  we can apply \eqref{eq: T(t)- alpha continuity in L2} to get
\begin{align*}
 \|(T(t-s)- \I)u(s)\|_{\delta} \leq & c \|(A+\I)^{\delta/2}(T(t-s)- \I)u(s)\|_{0}\\
 \leq & c \|(T(t-s)- \I)(A+\I)^{\delta/2 }u(s)\|_{0}\\
 \leq & c (t-s)^\gamma \|(A+\I)^{\delta/2} u(s)\|_{2\gamma} \\
 \leq & c (t-s)^\gamma \|u(s)\|_{\delta+2\gamma} \leq c (t-s)^\gamma
\end{align*}
as wanted. It remains to prove that $\|u(t)\|_{\delta+2\gamma}<c$ uniformly in $t$ for $t\in[0, t_0]$. Recall that

\[u(t) = T(t)f + \int_0^t T(t-r)F(u(r)) \ud r + \int_0^t T(t-r)G(u(r)) \ud z( r) ,\]
thus
\begin{align*}
 \|u(t)\|_{\delta+2\gamma} &\leq  \|T(t)f\|_{\delta+2\gamma} \\
 &+  \int_0^t\|  T(t-r)F(u(r))\|_{\delta+2\gamma}  \ud r \\
 &+  \int_0^t\| D^\eta_{0+} U(t;r) D^{1-\eta}_{t-} z_t(r) \|_{\delta+2\gamma} \ud r\\
 =: S_1 + S_2  + S_3.
 \end{align*}
The term $S_1$ is easily bounded by $c\|f\|_{\delta+2\gamma}$. The term $S_2$ is bounded recalling that $\|T(t-r) F(u(r))\|_{\delta+2\gamma} \leq (t-r)^{-\gamma} \|F(u(r))\|_\delta$ because of the smoothing action of the semigroup. Thus $S_2\leq c t^{1-\gamma}$. The last term can be treated in a similar way as the proof of Lemma \ref{lm: holder regularity of G term} with the difference that the space $H^{\delta}(\mu)$ is replaced by $H^{\delta+2\gamma}(\mu)$. All computations for the $H^{\delta+2\gamma}(\mu)$-norm term carry out in the same way
, except that the exponent $-\frac\delta2-\frac\beta2$ is replaced by $-\frac\delta2-\frac\beta2 -\gamma$ so that $S_3\leq c t^{1-\eta-\frac\delta2-\frac\beta2-\gamma}$ with $1-\eta-\frac\delta2-\frac\beta2-\gamma>0$ by construction of $\eta$.  Clipping the result together we have
\[
\|u(t)\|_{\delta+2\gamma} \leq c+ ct^{1-\gamma} + c t^{1-\eta-\frac\delta2-\frac\beta2-\gamma},
\]
and finally  taking the supremum over $t\in[0,t_0]$ we get the uniform bound.
\end{proof}

With slightly more restrictive assumptions on the noise we can show that the unique solution $u$ belongs to the spaces $W^\gamma([0,t_0], H^\delta_\infty(\mu))$, and thus to $C^\gamma([0,t_0], H^\delta(\mu))$, for all $(\gamma,\delta)$ such that $0<\gamma<1-\alpha-\frac{\beta}{2}-\frac{d_S}{4}$ and $\beta<\delta<\min(\frac{d_S}{2},1)$.

\begin{corollary}\label{cor: extra holder regularity}
Suppose (MMS), (HKE($\beta$)) and (FG).
\begin{itemize}
 \item[(a)] Let $0<\alpha<\frac{1}{2}$ and $0<\beta<\min(\frac{d_S}{2},1-2\alpha, 2(1-\alpha)-\frac{d_S}{2})$ be given. Suppose that $z\in C^{1-\alpha}([0,t_0],H_q^{-\beta}(\mu))$ for any $1<q<\frac{d_S}{\beta}$ and $f\in H^{2(1-\alpha)-\beta}(\mu)$. Then for any
$\beta<\delta<\min(\frac{d_S}{2},1)$ and $0<\gamma<1-\alpha-\frac{\beta}{2}-\frac{d_S}{4}$
 Equation \eqref{eq: introduction: mild sol} has a unique solution in the space $W^\gamma([0,t_0], H^\delta_\infty(\mu))$ and hence, it has a unique solution belonging to all these spaces.
 \item[(b)]  Moreover, this solution is an element of $C^\gamma([0,t_0], H^\delta(\mu))$ for any $\gamma$ and $\delta$ as before.

\end{itemize}
\end{corollary}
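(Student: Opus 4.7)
The plan is to reduce Corollary \ref{cor: extra holder regularity} to Proposition \ref{prop: hinz zaehle}(b) and Theorem \ref{thm: gamma holder regularity} applied individually for each admissible pair $(\gamma,\delta)$, and then to collapse the resulting family of solutions to a single function through a uniqueness-plus-embedding argument.

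For part~(a), I would first fix any admissible pair $(\gamma,\delta)$ and check the hypotheses of Proposition \ref{prop: hinz zaehle}(b) in turn. Setting $q:=d_S/\delta$, the bounds $\beta<\delta<d_S/2$ give $2<q<d_S/\beta$, so the noise hypothesis of the corollary supplies $z\in C^{1-\alpha}([0,t_0],H^{-\beta}_q(\mu))$. The parameter condition (P) matches the corollary's own range by construction. For the initial-value hypothesis, I need $\varepsilon>0$ such that $f\in H^{2\gamma+\delta+\varepsilon}(\mu)$; this reduces to the inequality $2\gamma+\delta<2(1-\alpha)-\beta$, which I would verify by a short case split on the value of $\min(d_S/2,1)$. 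When $\min(d_S/2,1)=d_S/2$, one gets $2\gamma+\delta<2(1-\alpha-\beta/2-d_S/4)+d_S/2=2(1-\alpha)-\beta$; when $\min(d_S/2,1)=1$, the assumption $d_S/2>1$ gives $2\gamma+\delta<3-2\alpha-\beta-d_S/2<2(1-\alpha)-\beta$. In either case the hypothesis $f\in H^{2(1-\alpha)-\beta}(\mu)$ embeds into $H^{2\gamma+\delta+\varepsilon}(\mu)$ for some small $\varepsilon>0$, and Proposition \ref{prop: hinz zaehle}(b) delivers a unique solution $u_{(\gamma,\delta)}\in W^\gamma([0,t_0],H^\delta_\infty(\mu))$.

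To obtain one common solution in the intersection of all these spaces, I would pick a reference pair $(\gamma_0,\delta_0)$ near the lower corner of the admissible region. The monotonicity of the Bessel scale yields $H^\delta_\infty(\mu)\hookrightarrow H^{\delta_0}_\infty(\mu)$ whenever $\delta\ge\delta_0$, and the elementary inequality $(t-s)^{-\gamma_0-1}\le t_0^{\gamma-\gamma_0}(t-s)^{-\gamma-1}$, valid for $\gamma\ge\gamma_0$ and $0\le s<t\le t_0$, gives the continuous embedding $W^\gamma([0,t_0],X)\hookrightarrow W^{\gamma_0}([0,t_0],X)$ for any normed space $X$. Combined, these supply a continuous embedding of every admissible $W^\gamma([0,t_0],H^\delta_\infty(\mu))$ into the reference space, so each $u_{(\gamma,\delta)}$ is still a mild solution there. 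Uniqueness in the reference space (again from Proposition \ref{prop: hinz zaehle}(b)) then forces $u_{(\gamma,\delta)}=u_{(\gamma_0,\delta_0)}$; since any two admissible pairs lie above a common reference pair, all these solutions collapse to a single function $u$ belonging to every admissible $W^\gamma([0,t_0],H^\delta_\infty(\mu))$.

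Part~(b) is then essentially automatic: the hypotheses of Theorem \ref{thm: gamma holder regularity} are precisely those of Proposition \ref{prop: hinz zaehle}(b) already checked above, so applying the theorem to the common solution $u$ for each admissible $(\gamma,\delta)$ yields $u\in C^\gamma([0,t_0],H^\delta(\mu))$. The most delicate part of the plan will be the careful bookkeeping for the initial-value condition, in particular making sure the two cases $d_S/2\le 1$ and $d_S/2>1$ both give the required inclusion; the $W^\eta$-embedding and the transfer of uniqueness between spaces are otherwise routine.
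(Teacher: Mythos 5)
Your proposal is correct and follows essentially the same route as the paper: verify the hypotheses of Proposition \ref{prop: hinz zaehle}(b) and Theorem \ref{thm: gamma holder regularity} for each admissible pair $(\gamma,\delta)$ (choosing $q=d_S/\delta$ and checking $2\gamma+\delta<2(1-\alpha)-\beta$), then identify all the resulting solutions via the Sobolev and $W^\eta$ embeddings together with uniqueness in a common smaller space. Your case split on $\min(d_S/2,1)$ is harmless but unnecessary, since $\delta<\min(d_S/2,1)\le d_S/2$ makes the first computation cover both cases, which is exactly the paper's one-line argument.
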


 \begin{proof}
Part (a). Take $\delta$ and $\gamma$ as in the assumption. Then  $2(1-\alpha) -\beta > 2\gamma + \frac{d_S}{2}>2\gamma + \delta + \varepsilon$ for some $\varepsilon>0$ implies that  $f\in H^{2\gamma + \delta+\varepsilon}(\mu)$. Moreover $H^{-\beta}_{q}(\mu) \subset H^{-\beta}_{\frac{d_S}{\delta}}(\mu)$ for $1<q<\frac{d_S}{\beta}$ since $\beta<\delta<d_S$, thus $z\in C^{1-\alpha}([0,t_0], H^{-\beta}_{\frac{d_S}{\delta}}(\mu))$. Then we are under the assumptions of Theorem \ref{thm: gamma holder regularity} and thus there exists a unique solution to \eqref{eq: introduction: Cauchy pb} which belongs to  $ W^\gamma([0,t_0], H^\delta_\infty(\mu))$. Because of the embedding of the spaces involved, clearly $u\in W^{\gamma'}([0,t_0], H^{\delta'}_\infty(\mu))$ for any $0<\delta'\le\delta$ and $0<\gamma'\le\gamma$, too. We also know that for $\delta'$ and $\gamma'$ satisfying the assumptions there exists a unique solution $u'$ to \eqref{eq: introduction: Cauchy pb} which is in $W^{\gamma'}([0,t_0], H^{\delta'}_\infty(
\mu))$. As the initial condition $f$ and the noise term $z$ are the same, then by uniqueness we must have $u=u'$ in the larger space $ W^{\gamma'}([0,t_0], H^{\delta'}_\infty(\mu))$.

Part (b). It follows directly form part (a) and Theorem \ref{thm: gamma holder regularity}
. Obviously, due to the embedding of the fractional Sobolev spaces $H^{\delta}(\mu)\subset H^{\delta'}(\mu)$ for $\delta'<\delta$ and of the H\"older spaces $C^{\gamma}\subset C^{\gamma'}$ for $\gamma'<\gamma$, we have that $u\in C^{\gamma'}([0, t_0], H^{\delta'}(\mu))$ for all $0<\delta'<\delta$ and $0<\gamma'<\gamma$.
\end{proof}
\begin{remark}\label{rem: low spectral dim}
The parameter condition (P), in particular $\delta<d_S/2$, and the integrability condition on the function $\Phi_2$ in the heat kernel estimate (HKE) have been used only for the product estimate in Proposition \ref{prop: hinz zaehle_paraproducts}. An analysis of the proofs, in particular those of \cite{HZ12}, shows that the assertions of Theorem \ref{thm: gamma holder regularity} and Corollary \ref{cor: extra holder regularity} remain valid under the parameter condition
\begin{equation}\label{eq: parameter low spectral dim}
0<\frac{d_S}{2}\le \beta\le\delta<1\, ,~~0<\alpha<\gamma<1-\alpha-\frac{\beta}{2}-\frac{\delta}{2}
\end{equation}
provided $z\in C^{1-\alpha}([0,t_0], H^{-\beta}_q(\mu))$ for some $q>2$ such that the multiplication property
\begin{equation}\label{eq: product assumption}
\|vz\|_{-\beta}\le c\|v\|_\delta\, \|z\|_{H_q^{-\beta}(\mu)}
\end{equation}
holds true for any $v\in H^\delta(\mu)$.\\
Then one obtains a complement to the former assertions for the case $d_S\le 1$: Here we can choose $\frac{1}{2}\le \beta<1-2\alpha$, which implies $1-\alpha>\frac{3}{4}$, in order to get a H\"older continuous solution to equation \eqref{eq: introduction: mild sol}. In particular, in the Gaussian setting (see below) such a $\dot{z}$ may be interpreted as a noise white in space and coloured in time.
\end{remark}

\subsection{Applications and extensions}
With similar techniques as in the previous section it is possible to treat equation \eqref{eq: introduction: Cauchy pb} with $A$ replaced by a fractional power of $A$, that is for instance $A^\theta$ for $0<\theta\le1$. This is done in \cite{HZ12}. In this case the semigroup associated with the fractional power $-A^\theta$ will be the subordinated semigroup $T^{(\theta)}(t)$ and the power $\theta$ must be taken into account in Assumption (P) and all following theorems accordingly. Then similar regularity properties as shown for the case $\theta=1$ follow. These results go in the direction of \cite{RZZ14} where the authors study a stochastic equation with fractional dissipation (that is with a term $A^\theta, 0<\theta\leq 1$) but with the difference that in the present paper  the noise is coloured, whereas in \cite{RZZ14} the noise is white in time and solutions  are strong and local. \\

Clearly, the special case of linear $F$ and $G$ can be considered. In this case the $L_\infty$-boundedness of the solution is no longer needed (see \cite[Theorem 1.3]{HZ12}) and the conditions on the parameters are weaker, in particular in Assumption (P) it is sufficient that $\gamma<1-\alpha -\frac\beta2-\frac\delta2$. Moreover the spectral dimension restriction $d_S<4$ can be lifted. Using the aforementioned corresponding results from \cite{HZ12} and the present methods one can get similar results as in the previous section, the proofs being completely analogous.\\

Moreover, one can easily consider linear combinations of noise terms such as $$\sum_{i=1}^N G_i(u)\cdot \dot z_i$$ for any finite integer $N$ and $G_i$ and $z_i$ as $G$ and $z$ in this paper.

\vskip3mm
Here we list a few examples. The results of Section \ref{subsc: regularity} can be applied to the same kind of stochastic equations considered in the paper by Hinz and Z\"ahle \cite[Section 7]{HZ12}. In particular, we can consider stochastic partial differential equations driven by fractional Brownian noises on metric measure spaces. The equations are studied in the pathwise sense and the results are valid $\mathbb P$-a.s.

\begin{example}
A classical example is the nonlinear heat equation on a smooth bounded domain $D\subset\mathbb R^n$, $n\le 3$, provided with the Lebesgue measure $\mu$, driven by fractional  Brownian field, see e.g.~\cite[Section 6]{HZ09_II}.
We consider a real valued fractional Brownian sheet $\{B^{H, K}(t,x), [0,t_0]\times \mathbb R^n\}$ with Hurst indices $0<H<1$ and $0<K<1$ for time and space respectively. This is a centered Gaussian field on $[0,t_0]\times \mathbb R^n$ with stationary rectangular increments satisfying
$$E\big( B^{H,K}(s,x)-B^{H,K}(s,y)-B^{H,K}(t,x)+B^{H,K}(t,y)\big)^2= c |s-t|^{2H} |x-y|^{2K}.$$
It can be shown \cite[Section 6]{HZ09_II} that there exits a version such that for almost all trajectories $\omega\in\Omega$ one has $B^{H,K}(\omega)\in C^{1-\alpha}([0,t_0], H^\sigma_q(\mu))$ for $0<1-\alpha<H, \, 0<\sigma<K$ and $1<q<\infty$.
Thus the distributional spatial partial derivatives   $\frac{\partial}{\partial x_i}B^{H,K}$ for $i=1, \ldots,n$ belong to $C^{1-\alpha}([0,t_0], H^{-\beta}_q(\mu))$, where $-\beta=\sigma-1$. Let  $F:\mathbb R^n \to \mathbb R$ and $G:\mathbb R^n \to \mathbb R^n$ such that $F$ and each component   $G_i$  satisfy Assumption (FG). Then Theorem \ref{thm: gamma holder regularity} and Corollary \ref{cor: extra holder regularity} can be applied in the pathwise sense to
\begin{equation}\label{eq: example in Rn}
\left\{
\begin{array}{l}
\frac{\partial u}{\partial t} = \Delta_D u +F(u) +\left\langle G(u), \frac\partial{\partial t}\nabla{B}^{H,K}\right\rangle\\
 u(0,x)=0, \textrm{ for } t\in(0,t_0) \\
 u(t,x)=0, \textrm{ for } x\in \partial D
\end{array}
\right.
\end{equation}
for almost all paths. Here $\Delta_D$ is the classical Dirichlet Laplacian, i.e., $-\Delta_D$ generates a heat semigroup  on $L_2(D)$ with Gaussian estimates. $\nabla B^{H,K}$ denotes the distributional gradient of the fractional Brownian sheet. The term $\left\langle G(u), \frac\partial{\partial t}\nabla{B}^{H,K}\right\rangle$ in \eqref{eq: example in Rn} is  given by $$\sum_{i=1}^n  G_i(u) \frac{\partial^2}{ \partial t \partial x_i} B^{H,K}\, , $$
where $\frac{\partial^2}{ \partial t \partial x_i} B^{H,K}$ is interpreted as $\dot{z}_i$ in the above sense.\\
Since the spectral dimension of $\mathbb R^n$ is $d_S=n$, for almost all sample paths the unique solution is an element of $$W^\gamma([0,t_0], H^\delta_\infty(\mu))\cap C^\gamma([0,t_0], H^\delta(\mu))$$ for all $\gamma$ and $\delta$ such that $1-H<\gamma<H-\frac{1-K}{2}-\frac{n}{4}$ and $1-K<\delta<\min (\frac{n}{2}, 1)$.
This can be satisfied only if $n\le3$. \\According to Remark \ref{rem: low spectral dim} for $d_S=n=1$ the result remains valid under the parameter condition \eqref{eq: parameter low spectral dim}, since the multiplication property \eqref{eq: product assumption} is fulfilled in this Euclidean case. The latter can be seen as follows. Theorem 4.5.2 in Runst and Sickel \cite{RS96} and restriction to functions vanishing on $\partial D$ lead to the estimate
$$\|uv\|_{H^\beta_p(\mu)}\le c\|u\|_\delta\|v\|_\beta\ ,~~~~
 1<p<2\, ,~~~~ \frac{1}{2}\le\beta<\delta<1\ .$$
Then for these parameters \eqref{eq: product assumption} follows by duality arguments. Hence, for $n=1$ and $K\le\frac{1}{2}$  H\"older continuous solutions to equation \eqref{eq: introduction: mild sol}  can be obtained if $H>\frac{2-K}{2}$.
Note that $K=\frac{1}{2}$ means white noise in space.
\end{example}

\begin{example}
A more sophisticated case mentioned in \cite[Example 2]{HZ12}) is the following. Let $(X,d,\mu)$ be a compact metric measure space satisfying Assumption (MMS) and admitting a semigroup $\{T(t), t\geq 0\}$ generated by a (fractal) Neumann Laplacian $\Delta$ associated to a local regular Dirichlet form $(\mathcal E, D(\mathcal E))$ on $X$, i.e., $-A=\Delta$. For various classes of fractals the corresponding heat kernels exist and satisfy Assumption (HKE($\beta$)) for any $\beta>0$ (see, e.g., Barlow and Bass \cite{BB92} and \cite{BB99}, Barlow, Bass, Kumagai and Teplyaev \cite{BBKT10}, Fitzsimmons, Hambly and Kumagai \cite{FHK94}, Hambly and Kumagai \cite{HK99}, Kigami \cite{Ki12}, Barlow, Grigor'yan and Kumagai \cite{BGK12} and the references therein).\\

A standard example for the noise process $z$, modified for our situation, is the following:  Let $e_0,e_1,e_2,\dots$ be a complete orthonormal system of eigenfunctions of $A$ in $L_2(\mu)$ and $\lambda_i$ be the corresponding eigenvalues,
 $B_1(t)^H,B_2(t)^H,\dots$ are i.i.d.\ fractional Brownian motions in $\mathbb{R}$ with Hurst exponent $\frac{1}{2}<H<1$,
and consider the formal series
$$b^H(t)=\sum_{i=1}^\infty B_i^H(t)\, q_i\, e_i$$

for real coefficients $q_i$. Then we get a modification $b^H$ such that a.s.
$$z:=b^H\in C^{1-\alpha}\left([0,t_0], H_q^{-\beta}(\mu)\right)$$
(with convergence of the series in these spaces) for any $0<1-\alpha<H$,
$\beta_*<\beta<1$ and $q>2$ under the following conditions on the measure $\mu$, the parameter $\beta_*$, the eigenfunctions $e_i$ and the coefficients $q_i$ for $i\geq 1$:\\
(a) $||e_i||_\infty\le c_1 \lambda_i^{a_1/2}$ and \\
(b) $|e_i(x)-e_i(y)|\le c_2 \lambda_i^{a_2/2}d(x,y)^b$ (up to an exceptional set)\\
for some positive constants $a_1, a_2, b, c_0, c_1, c_2$, and for $a:=\max(a_1,a_2)$,
$$\sum_{i=1}^\infty q_i^2\, \lambda_i^{-\beta_*+a-b2/w} <\infty \, .$$
(Note that in the case $q=2$, which is not relevant for our purposes, Conditions (a) and (b) are not needed and the convergence $\sum_{i=1}^\infty q_i^2\, \lambda_i^{-\beta_*} <\infty$ would be sufficient for the above property of $z$.)
 \vskip3mm ({\it Idea of proof:}
By the mapping properties of the resolvent operators $J^\sigma(\mu)$, which may be replaced here by $I^\sigma(\mu):=(-\Delta)^{\sigma/2}$ since the included eigenvalues are strictly positive, it is equivalent to get a modification, which satisfies a.s.
$$I^{\beta_*+b2/w}(\mu)b^H\in C^{1-\alpha}([0,t_0], H_q^{-\beta+\beta_*+b2/w}(\mu))$$
for any $q$ and $\beta>\beta_*$. For $0<\delta'<\delta$ the embedding of the H\"older space $C^\delta(X)$ into $H_q^{\delta'2/w}(\mu)$ can be seen, e.g., from the arguments in the proof of \cite[Proposition 5.6]{HZ09} (using there only the upper estimates taking into regard that the lower heat kernel estimates imply $\mu(B(x,r))\le c_0 r^{d_H}$ for any ball with centre $x$ and radius $r$). Therefore a sufficient condition for the above one is that $$I^{\beta_*+b2/w}(\mu)b^H\in C^{1-\alpha}([0,t_0], C^\delta(X))$$
for any $\delta<b$, and the latter can be proved by means of the Kolmogorov principle for the random function $$Y(t,x):=I^{\beta_*+b2/w}b^H(t,x)=\sum_{i=1}^\infty B_i^H(t)\, q_i\,\lambda_i^{-(\beta_*+b2/w)/2}\, e_i(x)\, .$$
We get
\begin{eqnarray*}
& &\mathbb{E}\big(Y(t_1,x_1)-Y(t_2,x_1)-(Y(t_1,x_2)-Y(t_2,x_2)\big)^2\\
&=& \sum_{i=1}^\infty q_i^2\lambda_i^{-(\beta_*+b2/w)}\mathbb{E}(B^H_i(t_1)-B^H_i(t_2))^2(e_i(x_1)-e_i(x_2))^2\\
&\le& \sum_{i=1}^\infty q_i^2\lambda_i^{-(\beta_*+b2/w)}|t_1-t_2|^{2H}c_2\lambda_i^a\,d(x_1,x_2)^{2b}\\
&\le& c\,|t_1-t_2|^{2H}d(x_1,x_2)^{2b}\, .
\end{eqnarray*}
Moreover,
\begin{eqnarray*}
& &\mathbb{E}(Y(t_1,x)-Y(t_2,x))^2\\
&=& \sum_{i=1}^\infty q_i^2\lambda_i^{-(\beta_*+b2/w)}\mathbb{E}(B^H_i(t_1)-B^H_i(t_2))^2|e_i(x)|^2\\
&\le& \sum_{i=1}^\infty q_i^2\lambda_i^{-(\beta_*+b2/w)}|t_1-t_2|^{2H}c_1^2\lambda_i^a\\
&\le& c\,|t_1-t_2|^{2H}\, .
\end{eqnarray*}
Using that the higher moments of centered Gaussian random variables are powers of the second moments this ensures the usual construction of a modification of $Y$ with the desired H\"older regularity by means of an extension of the values on a countable dense subset of $[0,t_0]\times X$.)
\vskip3mm
Note that because of the above ultracontractivity of the semigroup Condition (a) is always fulfilled for $a_1:=\frac{d_S}{2}$. Furthermore, if we work with the resistance metric $R(x,y)$ w.r.t.\ the Dirichlet form $\mathcal E$ then Condition (b) is satisfied for $a_2=b=1$.\\
For p.c.f.\ fractals with regular harmonic structures we have $d_S=\frac{2d_H}{d_H+1}<2$, see Kigami \cite{Ki01}. Moreover, under some mild additional assumptions on such fractals in Euclidean spaces the resistance metric $R$ satisfies $R(x,y)\le|x-y|^b$ for some $b> 0$, see Hu and Wang \cite{HW06}. Hence, in this case Condition (b) is also fulfilled for the Euclidean metric.\\
Examples with spectral dimension greater than 2 are provided by generalized Sierpinski carpets, see Barlow and Bass \cite{BB99}, or by certain products of fractals, see Strichartz \cite{St05}.
\vskip 3mm
According to Theorem \ref{thm: gamma holder regularity} function solutions to Equation \eqref{eq: introduction: mild sol} which are H\"older regular in time can be found for $d_S= \frac{2d_H }{w}<4$ (recall that $d_H$ denotes the Hausdorff dimension of $X$ and $w$ the walk dimension of the semigroup) and Hurst exponent
$H>\frac{1}{2}+\frac{d_S}{8}$. Recall that in this case we have $\beta<\frac{d_S}{2}$.\\
If $\frac{d_S}{2}\le 1$ then Remark \ref{rem: low spectral dim} provides the alternative parameter condition $\frac{1}{2}\le\beta< 2H-1$ for existence of H\"older continuous solutions.
In particular, for $\beta=\frac{1}{2}$ the noise is ``white'' in space.\\
For the classical case of the Dirichlet Laplace operator on the unit interval this example has been treated in Gubinelly, Lejay and Tindel \cite{GLT06} with different methods.\\

\end{example}




%
%

\end{document}